\newtheorem{theorem}{Theorem}[section]
\newtheorem{lemma}[theorem]{Lemma}
\newtheorem{proposition}[theorem]{Proposition}
\newtheorem{corollary}[theorem]{Corollary}
\theoremstyle{definition}
\newtheorem{example}[theorem]{Example}
\newcommand{\R}{{\mathbb{R}}}
\def\eps{\varepsilon}
\definecolor{zzttqq}{rgb}{0.6,0.2,0}
\begin{document}

\title{A short proof of Klee's theorem}
\author{John Zanazzi}
\address{
Northern Arizona University\\
Dept. of Mathematics and Statistics\\
Flagstaff, AZ 86011 
}
\address{Penn State University Mathematics Dept.\\ University Park, State College, PA 16802}
\email{jjz54@cornell.edu}
\thanks{\textit{Current address}: 610 Space Science Building, Cornell University, Ithaca, NY 14853}
\thanks{\textit{Phone}: (480)313-7794}

\maketitle

\begin{abstract}

In 1959,  Klee proved that a convex body $K$ is a polyhedron if and only if all of its projections are polygons. In this paper, a new proof of this theorem is given for convex bodies in $\R^3$.

\end{abstract}

\section{Introduction}

This paper will begin by summarizing the relevant work of Mirkil \cite{HMirkil} and Klee \cite{VKlee}.  Let $V$ be a $n$-dimensional real vector space and $C \subset V$.  The set $C$ is said to be a \textit{convex cone} if and only if $C$ is stable under both vector addition and multiplication, and \textit{polyhedral} if and only if $C$ is the intersection of a finite number of closed halfspaces. For a set $K$ embedded in a $n$-dimensional affine space $E$ and a point $p \in K$, define $K$ to be \textit{polyhedral at} $p$ if and only if some neighborhood of $p$ relative to $K$ is polyhedral.  For a set $K \subset E$ and a point $p \in E$, we will denote the smallest cone containing $K$ with vertex $p$ as $\text{cone}(p,K)$.  A $j$-\textit{flat} is a $j$-dimensional affine subspace of $E$, and a \textit{hyperplane} is a $(n-1)$-dimensional affine subspace of $E$.

In Mirkil \cite{HMirkil}, the following theorem is proven:

\begin{theorem}
If $C$ is a closed convex cone, then $C$ is polyhedral if and only if every 2-dimensional projection of $C$ is closed.
\label{thm:Mirkil}
\end{theorem}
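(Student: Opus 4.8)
The plan is to treat the two implications separately, the work being the direction that closedness of all $2$-dimensional projections forces $C$ to be polyhedral.

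For the easy direction I would use that a polyhedral cone is finitely generated, that the linear image of a finitely generated cone is again finitely generated, hence polyhedral and in particular closed; so if $C$ is polyhedral then every projection of $C$ --- a fortiori every $2$-dimensional one --- is closed.

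For the converse I would induct on the ambient dimension $n$. When $n\le 2$ every closed convex cone is polyhedral by a routine classification, so take $n\ge 3$ with every $2$-dimensional projection of $C\subseteq\R^n$ closed. First I would reduce, in two steps each invoking the inductive hypothesis, to the case that $C$ is pointed and full-dimensional: if $C$ spans a proper subspace $V$, apply the hypothesis to $C$ inside $V$ (its $2$-dimensional projections are among those of $C$); if $C$ contains a line, split off the lineality space $L$ and apply the hypothesis to the pointed cone $C\cap L^\perp\subseteq L^\perp$, using that $C$ is polyhedral iff $C\cap L^\perp$ is and that a plane $W\subseteq L^\perp$ satisfies $\pi_W(C)=\pi_W(C\cap L^\perp)$. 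Then I would fix a linear functional $f$ positive on $C\setminus\{0\}$, so that $B:=C\cap\{f=1\}$ is a compact $(n-1)$-dimensional convex body with $C=\{tb:t\ge 0,\ b\in B\}$, and $C$ is polyhedral iff $B$ is a polytope. Assuming $C$ is not polyhedral, $B$ is not a polytope, hence has infinitely many extreme points, and I would fix an accumulation point $q$ of $\operatorname{ext}(B)$ along with extreme points $e_k\to q$, $e_k\ne q$.

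The key step would run as follows. Set $U=q^\perp$. Since $0\notin\{f=1\}$, the line $\R q$ is transversal to $\{f=1\}$, so $\pi_U$ restricts to an affine isomorphism of $\{f=1\}$ onto $U$; hence $B':=\pi_U(B)$ is an affine copy of $B$ in which $q$ becomes $0\in\partial B'$ and the $e_k$ become extreme points $e'_k$ of $B'$ with $e'_k\to 0$ and $e'_k\ne 0$, while $\pi_U(C)=\text{cone}(0,B')=\bigcup_{t\ge 0}tB'$. I would then argue that $\text{cone}(0,B')$ cannot be simultaneously closed and polyhedral: if it is closed it equals the tangent cone $T_0(B')$ of $B'$ at $0$ (one inclusion because $[0,b']\subseteq B'$ for each $b'\in B'$, the reverse because $T_0(B')$ is the closure of the set of directions along which $B'$ contains a genuine segment issuing from $0$, all of which lie in $\text{cone}(0,B')$); but if $T_0(B')$ were polyhedral, then $B'$ would be locally polyhedral at $0$, hence would agree near $0$ with a polyhedron, which has only finitely many extreme points --- so only finitely many extreme points of $B'$ could lie near $0$, contradicting $e'_k\to 0$. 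For $n=3$ this finishes the proof, since $U$ is then a $2$-plane, so $\pi_U(C)$ is a $2$-dimensional projection of $C$ and hence closed by hypothesis, yet we have just seen it is not polyhedral --- impossible for a closed convex cone in the plane. For $n\ge 4$ one would instead feed $\pi_U(C)\subseteq\R^{n-1}$ back into the induction to reach the same contradiction.

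The hard part is precisely this key step: extracting a non-polyhedral lower-dimensional projection from the failure of $B$ to be a polytope. Its delicate ingredients are the dichotomy that $\text{cone}(0,B')$ either fails to be closed or else is a tangent cone --- so that curvature of $\partial B'$ at $0$, or a missing boundary ray, each delivers the contradiction --- and the verification that local polyhedrality at $0$ truly forbids infinitely many nearby extreme points, which one gets by passing to the polyhedron that agrees with $B'$ near $0$ and using the finiteness of its vertex set. A further obstacle, absent when $n=3$, is that a projection of a closed cone need not be closed: to return $\pi_U(C)$ to the induction in dimension $\ge 4$ one must first show that closedness of all $2$-dimensional projections of $C$ already forces closedness of this $(n-1)$-dimensional projection, and it is this propagation of closedness on which the higher-dimensional case ultimately turns.
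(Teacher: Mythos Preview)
Your approach is consistent with the paper's sketch: both reduce to showing that a compact hyperplane section of $C$ has only finitely many extreme points by proving that no point of this section is an accumulation point of other extreme points. The paper's sketch (which merely summarizes Mirkil's argument) states only this main idea; your proposal supplies a concrete mechanism --- projecting along an accumulation point $q$ of extreme points onto $q^\perp$ and exploiting that a closed convex cone in the plane is automatically polyhedral --- which is a sound way to fill in the details in dimension $3$.

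One point deserves care. The implication ``$T_0(B')$ polyhedral $\Rightarrow B'$ locally polyhedral at $0$'' is false in isolation: a disk tangent to a line at $0$ has a half-plane (hence polyhedral) tangent cone but is not locally polyhedral there. What makes your argument work is the prior step: you have already established $\text{cone}(0,B') = T_0(B')$, so polyhedrality of the tangent cone here means polyhedrality of the \emph{naive} cone $\{tb':t\ge 0,\ b'\in B'\}$, and it is this --- each extreme ray actually achieved by a point of $B'$ --- that forces local polyhedrality. This is precisely the fact the paper cites in its sketch of Theorem~\ref{thm:PolyP} (``$K$ is polyhedral at $p$ if and only if $\text{cone}(p,K)$ is polyhedral''), stated for the naive cone rather than its closure. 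Phrase the step accordingly.

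For $n\ge 4$ you correctly identify, but do not resolve, the remaining obstacle that $\pi_U(C)$ need not be closed; so the proposal is complete only for $n=3$. The paper, being content to cite Mirkil for the full result, does not address this either.
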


\textit{Sketch of Proof}. The forward direction of this statement follows from the fact that every projection of $C$ is polyhedral.  The main idea to prove the converse is as follows:  If $H$ is a hyperplane, then for all $x \in C\cap H$, there exists a neighborhood $N$ which contains no extreme points except possibly $x$.  \hfill\(\Box\)

\begin{example}
Let our vector space be $\R^3$ with the standard Cartesian coordinate system.  Take $C$ to be a circular cone supported by the $(x,y)$ plane so that the infinite half-line of support lies on the $x$-axis, and let $\pi_{(y,z)}(C)$ be the horizontal projection of $C$ into the $(y,z)$ plane. We see that $\pi_{(y,z)}(C)$ may be expressed as
 \[ \pi_{(y,z)}(C) = \{(0,a,b):a\in\R, \; b>0\} \cup \{(0,0,0)\}\]
Note that $\pi_{(y,z)}(C)$ is not closed, in accordance with Theorem \ref{thm:Mirkil}.
\end{example}

 Motivated by Theorem \ref{thm:Mirkil} comes the extensive work of Klee \cite{VKlee}, which includes the following theorem:

\begin{theorem} \label{thm:PolyP}  If $K$ is a $n$-dimensional convex subset of an affine space, $p \in K$, and $2\le j\le n$, then $K$ is polyhedral at $p$ if and only if $\pi(K)$ is polyhedral at $p$ whenever $\pi(K)$ is an affine projection of $K$ into a $j$-flat through $p$. \end{theorem}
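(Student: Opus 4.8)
\noindent\textit{Proof plan.}
The forward implication is routine: if some neighbourhood of $p$ in $K$ is polyhedral, I would observe that its image under any affine projection $\pi$ is again polyhedral (affine images of polyhedra are polyhedra, by Fourier--Motzkin elimination) and, $\pi$ being an open map, is a neighbourhood of $\pi(p)$ in $\pi(K)$; hence $\pi(K)$ is polyhedral at $\pi(p)$, and $\pi(p)=p$ when $\pi$ fixes the $j$-flat through $p$. So only the converse needs work. For the converse I would first reduce to $j=2$: every $2$-flat $F\ni p$ lies in a $j$-flat through $p$, and the projection onto $F$ factors through that one, so the forward implication lets me pass from ``every $j$-flat projection through $p$ is polyhedral at $p$'' to ``every $2$-flat projection through $p$ is polyhedral at $p$''. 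I may also assume $p\in\partial K$, the interior case being trivial, so that $\dim\overline{\text{cone}(p,K)}=n$. The plan is then to work with the cone $C:=\text{cone}(p,K)$, which equals $\text{cone}(p,K\cap U)$ for every neighbourhood $U$ of $p$; a routine convexity argument shows that $K$ is polyhedral at $p$ if and only if $C$ is a \emph{closed polyhedral cone} (and then $K$ agrees with $C$ near $p$), so it suffices to prove that $C$ is a closed polyhedral cone.

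To see that $\overline C$ is polyhedral I would apply Theorem~\ref{thm:Mirkil}: it is enough that every $2$-dimensional projection of $\overline C$ be closed. After translating $p$ to the origin such a projection is the orthogonal projection $\pi$ onto a $2$-flat through $p$, and since $\pi$ is linear and fixes $p$ we have $\pi(C)=\pi(\text{cone}(p,K))=\text{cone}(p,\pi(K))$. By hypothesis $\pi(K)$ is polyhedral at $p$, so $\text{cone}(p,\pi(K))=\text{cone}(p,\pi(K)\cap U')$ is the support cone at $p$ of a polyhedron coinciding with $\pi(K)$ near $p$, hence closed; then $\pi(C)\subseteq\pi(\overline C)\subseteq\overline{\pi(C)}=\pi(C)$ forces $\pi(\overline C)=\pi(C)$, which is closed. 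By Theorem~\ref{thm:Mirkil}, $\overline C$ is polyhedral.

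It remains to show $C=\overline C$, which is the heart of the matter. Since a convex set and its closure have the same relative interior, $\text{relint}\,\overline C=\text{relint}\,C\subseteq C$, so any point of $\overline C\setminus C$ lies on one of the finitely many proper faces of the polyhedron $\overline C$. Suppose $z_0\in\overline C\setminus C$; equivalently $z_0\in\overline C$ but the ray from $p$ through $z_0$ meets $K$ only in $p$. I would pick a supporting hyperplane $H$ of $\overline C$ at $z_0$; since $\overline C$ is a cone with apex $p$ we get $p\in H$, and since $K\subseteq\overline C\subseteq H^{-}$ this $H$ also supports $K$ at $p$. Writing $N$ for its inner normal and $K':=K\cap H$ for the contact set (a convex subset of the $2$-flat $H$, with $\text{cone}(p,K')=C\cap H$ recording the trace of $C$ on the corresponding face of $\overline C$), the plan is to project $K$ orthogonally onto the $2$-flat through $p$ spanned by $N$ and a defect direction chosen perpendicular, within $H$, to the flat part of $K'$, and to show that the resulting shadow of $K$ is \emph{not} polyhedral at $p$: the defect forces $\partial K$ to bend away from the polyhedral cone $\overline C$ near $p$ in that plane, and this bending is retained under projection along the (cylinder-like) complementary direction. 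This contradicts the hypothesis, so $C=\overline C$ and $K$ is polyhedral at $p$, which completes the converse.

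The hard part is this last step: making the right choice of $2$-flat and then verifying that the shadow is non-polyhedral at $p$. I expect this to rest on a case analysis according to whether the contact set $K'=K\cap H$ is a point, a segment, or $2$-dimensional --- in effect an induction on $\dim K'$ with the $0$-dimensional case as the base --- and essentially all of the work will lie there.
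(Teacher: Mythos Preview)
Your plan agrees with the paper's sketch on the forward direction and on the reduction of the converse to $j=2$ via the equivalence ``$K$ polyhedral at $p$ $\Longleftrightarrow$ $\text{cone}(p,K)$ polyhedral.'' The divergence is in how polyhedrality of the cone is extracted from the two--dimensional projection hypothesis. Klee, as the paper summarizes him, does not invoke Theorem~\ref{thm:Mirkil} as a black box: he uses Mirkil's argument to deduce that the \emph{hyperplane sections} of $\text{cone}(p,K)$ are polyhedral, and then appeals to a separate lemma of his own characterizing polyhedral cones through a parametrized family of such sections. You instead apply Theorem~\ref{thm:Mirkil} directly to $\overline{C}$ --- your computation $\pi(\overline{C})=\pi(C)=\text{cone}(p,\pi(K))$, closed because $\pi(K)$ is polyhedral at $p$, is correct --- and conclude that $\overline{C}$ is polyhedral. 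This is tidier in that it uses Mirkil only as a statement and avoids Klee's additional section lemma, but it leaves you with the separate obligation $C=\overline{C}$.

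That obligation is where your proposal has a genuine gap. Your outline --- take a supporting hyperplane $H$ of $\overline{C}$ at a defect point, set $K'=K\cap H$, and project onto the $2$-flat spanned by the inner normal $N$ and a direction read off from the geometry of $K'$ --- is a reasonable heuristic, but you yourself say ``essentially all of the work will lie there,'' and nothing you have written pins down why the resulting shadow must fail to be polyhedral at $p$; in particular one has to rule out that the bending producing the missing ray is washed out by the projection along the complementary directions, and the promised case analysis on $\dim K'$ is not carried out. Klee's organization via hyperplane sections sidesteps this closedness question at the cost of his extra characterization lemma. Either route can be pushed through, but as written your converse stops exactly at the step where the substance of the theorem lives.
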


\textit{Sketch of Proof}. To prove the ``only if" portion, the fact that a convex set $K$ is polyhedral at point $p\in K$ if and only if $\text{cone}(p,K)$ is polyhedral is used repeatedly on $K$, $\text{cone}(p,K)$, and their affine projections.

To prove the converse, the fact that all $j$-dimensional projections of $K$ are polyhedral follows directly from the statement.  In particular, all 2-dimensional projections of $K$ are polyhedral, thus all 2-dimensional projections of $K$ are closed.  Using Mirkil \cite{HMirkil}, this implies all intersections with hyperplanes are polyhedral.  Furthermore, Klee proves a cone is polyhedral if and only if its intersections with elements of a specific parameterized set of hyperplanes are polyhedral, thus $\text{cone}(p,K)$ is polyhedral.  Using again that a convex set $K$ is polyhedral at point $p\in K$ if and only if $\text{cone}(p,K)$ is polyhedral, Klee proves $K$ is polyhedral at $p$.  \hfill\(\Box\)

From Theorem \ref{thm:PolyP}, Klee establishes a corollary:

\begin{corollary} With $2\le j\le n$, a n-dimensional bounded convex subset is polyhedral if and only if all its projections into $j$-flats are polyhedral.  \end{corollary}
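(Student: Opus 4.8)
The plan is to run Theorem~\ref{thm:PolyP} at every point of $K$ and then invoke compactness to pass from ``polyhedral at each point'' to ``polyhedral.'' Throughout, write $K$ for the given $n$-dimensional bounded convex set; since $K$ is bounded, saying that $K$ (or any of its affine images) is polyhedral is the same as saying it is a polytope, i.e.\ the convex hull of finitely many points.

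The ``only if'' direction is immediate: the image of $\operatorname{conv}\{v_1,\dots,v_m\}$ under an affine map $\pi$ is $\operatorname{conv}\{\pi(v_1),\dots,\pi(v_m)\}$, so every projection of a polytope into a $j$-flat is again a polytope, hence polyhedral.

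For the ``if'' direction, suppose every projection of $K$ into a $j$-flat is polyhedral. Fix $p\in K$ and let $\pi$ be any affine projection of $K$ into a $j$-flat $F$ with $p\in F$. Since such a projection fixes $F$ pointwise, $p=\pi(p)\in\pi(K)$, and by hypothesis $\pi(K)$ is a polyhedron. Any polyhedron $Q$ is polyhedral at each of its points --- intersecting $Q$ with a small closed cube centered at the point produces a polyhedral relative neighborhood --- so $\pi(K)$ is polyhedral at $p$. As $\pi$ was an arbitrary such projection, Theorem~\ref{thm:PolyP} yields that $K$ is polyhedral at $p$; and as $p\in K$ was arbitrary, $K$ is polyhedral at every one of its points.

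It remains to upgrade this to ``$K$ is polyhedral,'' and this is the step I expect to require the most care. For each $p\in K$ choose a polyhedral relative neighborhood $M_p\subseteq K$ of $p$; being bounded, $M_p$ is a polytope, and by definition $M_p$ contains a relatively open subset $U_p\ni p$ of $K$. The family $\{U_p\}_{p\in K}$ covers the compact set $K$, so $K=U_{p_1}\cup\dots\cup U_{p_m}$ for finitely many points $p_1,\dots,p_m$. If $e\in U_{p_i}$ is an extreme point of $K$, then $e\in M_{p_i}\subseteq K$, and any equation $e=\tfrac12(a+b)$ with $a,b\in M_{p_i}$ forces $a=b=e$; hence $e$ is an extreme point of the polytope $M_{p_i}$. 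Since a polytope has only finitely many extreme points, each $U_{p_i}$ contains only finitely many extreme points of $K$, so $K$ has finitely many extreme points in all, and the finite-dimensional Krein--Milman theorem gives $K=\operatorname{conv}(\operatorname{ext}K)$. Thus $K$ is a polytope, hence polyhedral. The only genuinely delicate point is the bookkeeping with relative neighborhoods in this last paragraph; everything else is a direct appeal to Theorem~\ref{thm:PolyP} and elementary convexity.
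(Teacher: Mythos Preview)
Your argument is correct. The paper itself gives no proof of this corollary; it merely records it as a consequence of Theorem~\ref{thm:PolyP} established by Klee, so there is nothing substantive to compare against. Your derivation---apply Theorem~\ref{thm:PolyP} at every point, then use compactness of $K$ to deduce that $K$ has only finitely many extreme points and invoke Minkowski/Krein--Milman---is the natural way to fill in the gap, and all the bookkeeping with relative neighborhoods and extreme points is handled correctly. One tacit assumption worth flagging: you use that $K$ is compact, which requires $K$ to be closed; this is not stated explicitly in the corollary but is implicit, since a set that is polyhedral (an intersection of finitely many closed halfspaces) is necessarily closed.
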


The following statement follows from the previous corollary:

\begin{theorem}\label{thm:Klee}
If $K$ is a convex body in $\R^3$ whose orthogonal projection into every plane is a polygon, then $K$ is a polyhedron.
\end{theorem}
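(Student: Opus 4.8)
The plan is to deduce the statement from Mirkil's Theorem~\ref{thm:Mirkil} by lifting one dimension. Identify $\R^3$ with the affine hyperplane $\{x_4 = 1\}\subset\R^4$ and set $C := \mathrm{cone}\bigl(0,\,K\times\{1\}\bigr)$, the smallest cone in $\R^4$ with vertex $0$ containing the lifted copy of $K$; concretely $C = \{(tx,t) : x\in K,\ t\ge 0\}$. The goal is to show that $C$ is polyhedral and then read off that $K$ is a polyhedron; by Theorem~\ref{thm:Mirkil} it is enough to know that $C$ is a closed convex cone and that every two-dimensional projection of $C$ is closed.

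Two structural facts connect $C$ with $K$. First, $C$ is closed: since $K$ is a convex body it is compact, so $K\times\{1\}$ is a compact base that is strictly separated from the vertex $0$ by the hyperplane $\{x_4 = 1/2\}$, and a cone over such a base is closed --- if $t_kb_k\to c$ with $b_k\in K\times\{1\}$, then reading the fourth coordinate gives $t_k\to c_4$, so either $c=0$ or $b_k\to c/c_4\in K\times\{1\}$, and in both cases $c\in C$. Second, $C$ is polyhedral if and only if $K$ is a polyhedron: one has $C\cap\{x_4=1\}=K\times\{1\}$, which is bounded, so polyhedrality of $C$ forces $K\times\{1\}$ to be a polytope; conversely if $K=\mathrm{conv}\{v_1,\dots,v_m\}$ then $C=\mathrm{cone}\bigl(0,\{(v_1,1),\dots,(v_m,1)\}\bigr)$ is a finitely generated convex cone, hence polyhedral.

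It remains to check the projection hypothesis. Let $L:\R^4\to\R^2$ be linear and surjective, and write $L(x,1)=Mx+v$ with $M:\R^3\to\R^2$ linear and $v=L(0,1)$; then $L(C)=\mathrm{cone}\bigl(0,\,M(K)+v\bigr)$. The set $M(K)+v$ is an affine image of $K$ in the plane, so if $M$ has rank $2$ it is an orthogonal projection of $K$ into a plane followed by a linear isomorphism and a translation --- a polygon, by hypothesis --- while if $M$ has rank at most $1$ it is a segment or a point. In every case $M(K)+v$ is a polytope in the plane, hence $L(C)$ is the cone over finitely many points, a finitely generated --- in particular polyhedral, in particular closed --- subset of $\R^2$. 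Thus every two-dimensional projection of $C$ is closed; by Theorem~\ref{thm:Mirkil}, $C$ is polyhedral, and therefore $K$ is a polyhedron.

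I expect essentially all of the content to sit in the reduction itself: the point is that passing to $\mathrm{cone}(0,K\times\{1\})$ converts ``$K$ is a polytope'' into ``a closed convex cone is polyhedral'' while sending the two-dimensional projections of the cone to cones over the (affine images of the) planar projections of $K$, which are polygons precisely by the hypothesis. The three computations above --- closedness of $C$, the identity $C\cap\{x_4=1\}=K\times\{1\}$, and $L(C)=\mathrm{cone}(0,M(K)+v)$ --- are routine; the only care needed is to include the degenerate cases where $M$ has rank at most $1$, in which $M(K)+v$ fails to be two-dimensional but still has a polyhedral cone for the cheap reason that finitely generated cones are polyhedral. A local variant --- prove $\mathrm{cone}(p,K)$ is polyhedral for each $p\in K$, using $\pi(\mathrm{cone}(p,K))=\mathrm{cone}(\pi(p),\pi(K))$, and then globalize by compactness of $K$ --- also works, but it carries the extra burden of showing $\mathrm{cone}(p,K)$ is already closed before Theorem~\ref{thm:Mirkil} can be invoked, which is exactly the wrinkle the lift avoids.
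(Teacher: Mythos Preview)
Your argument is correct: the homogenization $C=\mathrm{cone}\bigl(0,K\times\{1\}\bigr)\subset\R^4$ is a closed convex cone, every linear image of it in a plane is the cone over a polytope (a polygon when $M$ has rank~$2$, a segment or point otherwise), hence polyhedral and closed, and Mirkil's Theorem~\ref{thm:Mirkil} then forces $C$ to be polyhedral, whence $K=C\cap\{x_4=1\}$ is a polytope. The care you take with the degenerate ranks of $M$ and with closedness of $C$ is appropriate, and the identification of a rank-$2$ linear map $\R^3\to\R^2$ with an orthogonal projection followed by a linear isomorphism is exactly what connects the hypothesis on \emph{orthogonal} projections to arbitrary linear images.

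However, this is not the route the paper takes, and in fact it is essentially the classical Klee reduction that the paper is written to bypass. The paper instead passes to the polar dual: by Proposition~\ref{th: PDual}, the hypothesis ``every planar projection of $K$ is a polygon'' becomes ``every planar section of $K^*$ through the origin is a polygon,'' and the paper then proves directly (Lemma~\ref{lem:3-edges} and Proposition~\ref{prop:main}) that $K^*$ can have only finitely many extreme points, via a short geometric argument about flat pieces of $\partial K^*$. Your proof is a clean packaging of the Mirkil--Klee machinery and has the virtue of being a one-step deduction from Theorem~\ref{thm:Mirkil}; the paper's proof is self-contained in three dimensions, never invokes Theorem~\ref{thm:Mirkil}, and isolates the elementary reason---accumulating extreme points would force a non-polygonal section---behind the result.
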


The proof of this theorem is simplified if instead of reformulating the problem in terms of closed projections of convex cones, one shows that all points $x\in K$ are located within a neighborhood which contain no extreme points except possibly $x$.  The next sections explain this proof in detail.

\section{Dual reformulation}

For a plane $P$ and sets $X,Y$ embedded in $\R^3$, denote the orthogonal projection of $X$ into $P$ by $\pi_P(X)$, the union and intersection of $X$ and $Y$ by $X\cup Y$ and $X \cap Y$ respectively, the convex hull of $X$ by $\text{conv}(X)$, and the boundary of $X$ by $\partial X$.  For points $p,q,r$ in $\R^3$, denote the triangle with vertices $p,q,r$ by $\triangle pqr$, and the line segment bounded by $p$ and $q$ by $[pq]$.

Recall that a \emph{convex body} is a closed bounded convex set with nonempty interior.  Fix a convex body $K$ in $\R^3$ so that the origin of $\R^3$ belongs to the interior of $K$.  The \emph{polar dual} of $K$ will be denoted as $K^*$;
i.e.,
\[ K^* = \{\, y \mid x \cdot y \le 1\ \text{for every}\  x \in K \,\} \]

Clearly $K^*$ is a convex body
and 
the origin is an interior point of $K^*$.
Moreover $K^*$ is a convex polyhedron if and only if so is $K$.

The following statement follows directly from the definition of polar dual.

\begin{proposition} If $P$ is a plane passing through the origin, then 
\[K^* \cap P= \pi_P(K)^*\cap P.\]
\label{th: PDual} \end{proposition}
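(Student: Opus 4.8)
The plan is to reduce the identity to a single elementary observation about the Euclidean pairing. Since $P$ is a plane through the origin, it is a linear subspace of $\R^3$, so every $x \in \R^3$ splits uniquely as $x = \pi_P(x) + x'$ with $x' \perp P$. Consequently, for every $y \in P$ and every $x \in \R^3$ one has $x \cdot y = \pi_P(x)\cdot y + x'\cdot y = \pi_P(x)\cdot y$, since $x' \cdot y = 0$. In words: orthogonal projection onto $P$ is adjoint to pairing against vectors of $P$, and this is the only fact the proof needs.

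First I would fix an arbitrary $y \in P$ and unwind the definition of the polar dual. By definition $y \in K^*$ if and only if $x\cdot y \le 1$ for all $x \in K$; by the pairing identity above this is equivalent to $\pi_P(x)\cdot y \le 1$ for all $x \in K$, i.e.\ to $z\cdot y \le 1$ for all $z \in \pi_P(K)$ (using that $\pi_P(K)$ is exactly the set of such $\pi_P(x)$), which is precisely the condition $y \in \pi_P(K)^*$. Since this holds for every $y \in P$, intersecting both membership conditions with $P$ yields $K^*\cap P = \pi_P(K)^*\cap P$.

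The one point that needs a word of care is the interpretation of $\pi_P(K)^*$: because $\pi_P(K)$ lies inside the two-dimensional plane $P$, its polar dual taken in $\R^3$ is invariant under translation in the direction orthogonal to $P$ — a ``cylinder'' over the dual computed intrinsically within $P$ — so that intersecting with $P$ recovers that intrinsic two-dimensional dual; hence the two natural readings of $\pi_P(K)^*$ agree after the intersection and the statement is unambiguous. Apart from settling this bookkeeping there is no genuine obstacle here: the proposition is a direct consequence of the adjunction between orthogonal projection and inclusion, and the argument is finished the moment the identity $x\cdot y = \pi_P(x)\cdot y$ for $y \in P$ has been recorded.
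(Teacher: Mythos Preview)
Your argument is correct and is exactly the direct-from-definition verification the paper has in mind; indeed, the paper gives no explicit proof, stating only that the proposition ``follows directly from the definition of polar dual.'' The identity $x\cdot y=\pi_P(x)\cdot y$ for $y\in P$ is the only fact required, and your remark distinguishing the ambient and intrinsic polars of $\pi_P(K)$ is a helpful clarification that the paper leaves implicit.
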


Note that $\pi_P(K)^*\cap P$ is a polygon if and only if so is $\pi_P(K)$.
Using the above proposition, Theorem \ref{thm:Klee} can be reformulated the following way:

\begin{theorem}\label{thm:reformulation}
Suppose $K^*$ is a convex body in $\R^3$
containing the origin in its interior.  If for every plane $P$ passing through the origin, the intersection $P\cap K^*$ is a polygon, then $K^*$ is a polyhedron.
\end{theorem}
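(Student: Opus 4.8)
The plan is to prove that $K^*$ has only finitely many extreme points. Once this is known, $K^*$ is the convex hull of a finite set (in $\R^3$ every compact convex set is the convex hull of its extreme points), hence a polytope, hence a polyhedron. To obtain finiteness I will establish the local statement announced after Theorem~\ref{thm:Klee}: \emph{every $x\in\partial K^*$ has a neighborhood $U$ in $\R^3$ with $U\cap\text{ext}(K^*)\subseteq\{x\}$.} This forces $\text{ext}(K^*)$ to be finite: each extreme point is then isolated in $\text{ext}(K^*)$, and $\text{ext}(K^*)$ is closed, because a limit $x$ of a sequence of distinct extreme points lies in $\partial K^*$ and yet its neighborhood $U$ from the local statement would have to contain infinitely many of them; so $\text{ext}(K^*)$ is a closed discrete subset of the compact set $\partial K^*$. (At interior points of $K^*$ the local statement is trivial, since a small enough ball about an interior point contains no boundary point.)

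To prove the local statement, fix $x\in\partial K^*$. If $x$ lies in the relative interior of a $2$-dimensional face $F$ of $K^*$, then the plane $H$ containing $F$ is the unique supporting plane of $K^*$ at $x$, and, using that $0$ is interior to $K^*$ and that a full $2$-dimensional disc about $x$ inside $H$ lies in $F\subseteq K^*$, one checks that $K^*$ agrees with the closed half-space bounded by $H$ throughout a neighborhood of $x$; that neighborhood contains no extreme points at all. So assume $x$ is in the relative interior of no $2$-face, and suppose for contradiction that there are distinct extreme points $x_n\to x$ with $x_n\neq x$. Because $0$ is interior to $K^*$ and $x\in\partial K^*$, the open segment from $0$ to $x$ lies in the interior and the ray from $0$ through $x$ exits $K^*$ at $x$; hence $0,x,x_n$ are not collinear once $|x_n-x|$ is small (a collinear $x_n$ near $x$ would be interior to $K^*$ or outside it). For such $n$ let $P_n$ be the plane through $0,x,x_n$; all these planes contain the line $\ell$ through $0$ and $x$, the section $Q_n=P_n\cap K^*$ is a polygon with $0$ in its relative interior and $x$ on its boundary, and $x_n$ — being extreme in $K^*$ and lying in $Q_n\subseteq K^*$ — is an extreme point, hence a vertex, of the polygon $Q_n$.

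Since a polygon has finitely many vertices, no plane of the pencil through $\ell$ contains infinitely many of the $x_n$; after passing to a subsequence the $P_n$ are pairwise distinct, and by compactness of the pencil they converge to a plane $P_*\supseteq\ell$, with $Q_n\to Q_*=P_*\cap K^*$ in the Hausdorff metric (intersection with $K^*$ varies continuously over planes meeting the interior of $K^*$) and $x_n\to x\in\partial Q_*$. Extracting a contradiction from this collapsing family of polygonal sections is the heart of the matter, and the step I expect to present the main difficulty. The pencil through $\ell$ by itself is not sufficient — a body that is locally a circular cone at $x$ yields a sector, hence a polygonal piece, in every plane of that pencil — so one must also use that each $x_n$, being extreme, is a vertex of $P\cap K^*$ for \emph{every} plane $P$ through the line joining $0$ and $x_n$, which forces $\text{cone}(x_n,K^*)$ to be a pointed cone (a cone over a bounded convex base). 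Tracking the two edges of $Q_n$ at $x_n$ — each a maximal segment of $\partial K^*$, hence contained in a face of $K^*$ — together with the continuity of the polygonal sections of $K^*$ by the planes of the $\ell$-pencil, one argues that the local structure of $K^*$ at $x$ is polyhedral, so that $x$ is the only extreme point of $K^*$ near $x$, contradicting $x_n\to x$. With the local statement established, the finiteness of $\text{ext}(K^*)$, and hence the theorem, follow as in the first paragraph.
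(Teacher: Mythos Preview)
Your overall strategy coincides with the paper's: show that the extreme points of $K^*$ are isolated, hence finite, by assuming a sequence of distinct extreme points $x_n\to x$ and deriving a contradiction.  The setup through the second paragraph is sound.  The genuine gap is the third paragraph: you correctly identify that the pencil of planes through $\ell=\overline{0x}$ is not enough (the circular-cone example shows this), but then the remainder --- ``tracking the two edges of $Q_n$ at $x_n$ \ldots\ one argues that the local structure of $K^*$ at $x$ is polyhedral'' --- is an assertion, not an argument.  Nothing you have written rules out, for instance, that near $x$ the boundary looks like a cone over a smooth curve: each section through $\ell$ is then a sector, and the edges of $Q_n$ at $x_n$ collapse to the apex without yielding any flat face.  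Knowing that $\text{cone}(x_n,K^*)$ is pointed for each $n$ does not by itself propagate any polyhedral structure to the limit.

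The ingredient you are missing is this.  Instead of working with planes through $\ell$, pass to a subsequence so that the unit vectors $u_n=(x_n-x)/|x_n-x|$ converge to some $u$, and take the plane $P$ through $0$, $x$, and $x+u$.  Because $P\cap K^*$ is a polygon with $x$ on its boundary, there is a nontrivial segment $[pq]\subset\partial K^*$ with $p=x$ pointing in the direction $u$.  Now cut by a plane through the origin meeting $[pq]$ transversely at its midpoint $m$; the two polygon edges $[my]$, $[mz]$ issuing from $m$ lie in $\partial K^*$.  The key geometric lemma is that if $m$ lies strictly between $p$ and $q$ and $[my]\subset\partial K^*$, then the whole triangle $\triangle pqy$ lies in $\partial K^*$ (otherwise some interior point of the triangle is interior to $K^*$, and a short transverse segment there makes the interior of the triangle --- and in particular the midpoint of $[my]$ --- interior to $K^*$).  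The two flat triangles $\triangle pqy$, $\triangle pqz$ on $\partial K^*$, together with one interior point of $K^*$, span a solid neighborhood of the open segment near $p$ in the direction $u$; for large $n$ the point $x_n$ lies in that solid and is a convex combination of $p,q,y,z$ and the interior point, hence not extreme.  This is exactly the contradiction you want, and it replaces the hand-waved final step.
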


\section{Proof of Theorem \ref{thm:reformulation}}

\begin{lemma}\label{lem:3-edges}
Let $K^*$ be a convex body in $\R^3$ and $p,q,x,y\in K^*$.
If $x$ lies between $p$ and $q$, and the line
segment $[xy]$
lies completely in $\partial K^*$,
then the triangle $\triangle pqy$ lies completely in $\partial K^*$.
\end{lemma}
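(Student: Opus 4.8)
The plan is to produce a single supporting hyperplane of $K^*$ passing through all four points $p,q,x,y$; once that is in hand, the conclusion follows from convexity alone.

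First I would find a supporting hyperplane containing the whole segment $[xy]$. Let $m$ be the midpoint of $[xy]$, which lies on $\partial K^*$ by hypothesis, and choose a supporting hyperplane $H=\{z:\ell(z)=c\}$ of $K^*$ at $m$, where $\ell$ is a nonzero linear functional with $\ell\le c$ on $K^*$ and $\ell(m)=c$. Since $\ell(x)\le c$, $\ell(y)\le c$ and $\tfrac12\ell(x)+\tfrac12\ell(y)=\ell(m)=c$, both of the first two inequalities are forced to be equalities; hence $x,y\in H$ and so $[xy]\subseteq K^*\cap H$. The same reasoning then handles $p$ and $q$: reading ``$x$ lies between $p$ and $q$'' as strict betweenness (if $x$ were allowed to be an endpoint the statement would fail, for instance on a prism), write $x=(1-t)p+tq$ with $0<t<1$; from $(1-t)\ell(p)+t\ell(q)=\ell(x)=c$ together with $\ell(p)\le c$, $\ell(q)\le c$ and positivity of the weights, I get $\ell(p)=\ell(q)=c$, i.e.\ $p,q\in H$.

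Having placed $p,q,y$ in $K^*\cap H$, I would finish by observing that $K^*\cap H$ is convex (being the intersection of the convex body $K^*$ with the affine set $H$), hence contains $\triangle pqy=\text{conv}\{p,q,y\}$, and that $K^*\cap H\subseteq\partial K^*$ because any point of $K^*$ lying on a supporting hyperplane must be a boundary point (an interior point would have a neighborhood inside $K^*$ on which $\ell$ exceeds $c$). Combining these gives $\triangle pqy\subseteq\partial K^*$.

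The proof is short and uses only elementary convexity. The only steps that call for a little care are the two applications of the principle ``a convex combination attaining an extreme value forces each term to that extreme value'' — in particular the observation that a supporting hyperplane taken at an interior point of the boundary segment $[xy]$ must contain all of $[xy]$, and likewise both $p$ and $q$ — together with the minor point that ``between'' must be understood in the strict sense.
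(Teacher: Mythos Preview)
Your argument is correct and is, if anything, the more standard route: pick a supporting hyperplane at an interior point of the boundary segment $[xy]$, then propagate membership in that hyperplane first to $x,y$ and then to $p,q$ via the ``equality in a convex combination forces equality of the terms'' principle; the triangle then sits in $K^*\cap H\subseteq\partial K^*$. The paper proceeds instead by contradiction and pure incidence geometry: assuming some $r\in\triangle pqy$ is interior to $K^*$, it takes a short transversal segment $L$ through $r$ so that $\text{conv}(L\cup\triangle pqy)$ is a genuine bipyramid, whence the whole relative interior of $\triangle pqy$ is interior to $K^*$; but the midpoint of $[xy]$ is in that relative interior (here is where strict betweenness of $x$ is used), contradicting $[xy]\subset\partial K^*$. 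Your approach is shorter and avoids the mild case analysis lurking in the bipyramid picture (one must know $\triangle pqy$ is nondegenerate for the bipyramid to make sense), while the paper's approach has the virtue of never invoking a supporting hyperplane and staying entirely inside the body. Your remark that ``between'' must be strict, with the prism counterexample, is apt and matches the way the paper silently uses strictness.
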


\begin{proof}
Suppose to the contrary that the point $r \in \triangle pqy$ belongs to the interior of $K^*$.  This implies the existence of a line segment $L \subset K^*$ containing $r$ such that the convex hull $\text{conv}(L \cup \triangle pqy)$ is a bipyramid in $\R^3$, and the interior of $\triangle pqy$ lies in the interior of $\text{conv}(L \cup \triangle pqy)$.

Therefore, all the interior points of $\triangle pqy$ belong to the interior of $K^*$.  Because the midpoint of $[xy]$ lies in the interior of $\triangle pqy$, the result follows.
\end{proof}

\begin{proposition}\label{prop:main}
Suppose $K^*$ satisfies the conditions of Theorem \ref{thm:reformulation}.  For all points $p,q \in K^*$, there exists an $\eps>0$ such that for $r \in K^*$,
if $ \, 0<|p-r|<\eps$ 
and $\angle rpq<\eps$,
then
$r$ is not extreme.
\end{proposition}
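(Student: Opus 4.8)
The plan is to separate the easy ``interior'' situations from the genuinely two-dimensional heart of the statement, and then to manufacture flat pieces of $\partial K^*$ by feeding edges of polygonal sections into Lemma \ref{lem:3-edges}. First I would dispose of two reductions. If $p\in\mathrm{int}\,K^*$, then a small ball about $p$ lies in $\mathrm{int}\,K^*$, so any $\eps$ smaller than its radius works and no $r$ is extreme. Assuming $p\in\partial K^*$, convexity forces exactly one of two possibilities for the chord $[pq]$: either $(p,q)\subset\mathrm{int}\,K^*$, or $[pq]\subset\partial K^*$. In the first case I would use the standard fact that a convex combination of $p\in K^*$ with an interior point of $K^*$ is again interior: taking an interior point $w$ on $(p,q)$ and a ball $B(w,\rho)\subset\mathrm{int}\,K^*$, the cone with apex $p$ over $B(w,\rho)$ (minus its apex) lies in $\mathrm{int}\,K^*$ and contains every $r=p+su$ with $\angle(u,\overrightarrow{pq})$ and $s$ small; choosing $\eps$ accordingly makes the whole punctured cone interior, so again no $r$ is extreme. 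This leaves the essential case $[pq]\subset\partial K^*$.

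In that case the plan is to exploit that $[pq]$ lies in $\partial K^*$ in order to flatten triangles. Fix a point $x\in(p,q)$; then $x\in\partial K^*$, and $x$ is not extreme. For any plane $Q$ through the origin and through $x$, the section $Q\cap K^*$ is, by hypothesis, a polygon, and since $Q$ passes through the interior point $0$ its boundary is $Q\cap\partial K^*$; hence that boundary contains a nondegenerate edge with endpoint $x$, i.e.\ a segment $[xy]\subset\partial K^*$ with $y\in Q$, $y\neq x$. Since $x\in(p,q)$, Lemma \ref{lem:3-edges} immediately yields a flat triangle $\triangle pqy\subset\partial K^*$. As $Q$ rotates about the line through the origin and $x$, and as $x$ ranges over $(p,q)$, this produces a whole family of flat triangles in $\partial K^*$, each containing the spine $[pq]$ and fanning out to its endpoint $y=y(Q,x)$.

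With these triangles in hand I would finish as follows. Every point of such a triangle other than its three vertices lies in the relative interior of a nondegenerate segment contained in $\partial K^*\subset K^*$, hence is not extreme. I would then argue that, for $\eps$ small enough, every $r$ with $0<|p-r|<\eps$ and $\angle rpq<\eps$ is accounted for: those $r$ in $\mathrm{int}\,K^*$ are non-extreme for free, while those on $\partial K^*$ fall in the relative interior (or on a non-vertex edge) of one of the flat triangles $\triangle pqy$ described above, and are therefore non-extreme. Concretely, this amounts to showing that the directions $\overrightarrow{py}$, together with the planar wedges they span against $\overrightarrow{pq}$, cover a full solid-angle neighborhood of the direction $\overrightarrow{pq}$ in the tangent cone of $K^*$ at $p$.

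The hard part will be exactly this last covering claim: proving that the flat triangles obtained from sections through the origin actually sweep out a neighborhood of the direction $\overrightarrow{pq}$, leaving no sliver of directions along which an extreme point could survive. The delicate points are that at the section $Q=\mathrm{plane}(0,p,q)$ the triangle degenerates, its apex direction $\overrightarrow{xy}$ collapsing onto $\pm\overrightarrow{pq}$, so I must show that the endpoints $y(Q,x)$ move off this plane to both sides as $Q$ rotates; this is where I expect to lean on continuity of the polygonal sections $Q\cap K^*$ in $Q$ and on convexity to guarantee that the swept triangles have nonempty interior straddling $\mathrm{plane}(0,p,q)$ near $p$. I would also separately handle the degenerate configuration in which $0,p,q$ are collinear, simply by replacing $\mathrm{plane}(0,p,q)$ throughout with an arbitrary plane containing that common line.
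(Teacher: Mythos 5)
Your setup (the dichotomy for the chord $[pq]$, and feeding edges of central sections into Lemma \ref{lem:3-edges}) is exactly the paper's, but the proof stops at its crux: the covering claim you defer to the end is the entire content of the proposition, and the mechanism you propose for it would not work as stated. The edges of the polygons $Q\cap K^*$ need not depend continuously on $Q$ (an edge at $x$ can jump or collapse as the plane rotates, and at $Q=\mathrm{plane}(0,p,q)$ it degenerates onto the line $pq$, as you note), so ``continuity of the polygonal sections'' is not something you can lean on without proof. More fundamentally, covering by flat triangles is the wrong target: an admissible $r$ need not lie on any flat triangle through $[pq]$ at all --- it can lie in the interior of $K^*$, or a priori on a part of $\partial K^*$ strictly inside the wedge cut out by the supporting planes through the line $pq$ --- so even a successful sweep would leave cases that your outline does not address.

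The idea you are missing is that a single section, used with \emph{both} of its edges, plus one interior point, finishes the proof with no covering of the boundary whatsoever. Take one plane $P$ through the origin transverse to $[pq]$ at its midpoint $x$. The polygon $P\cap K^*$ has two sides $[xy]$ and $[xz]$ issuing from $x$, and Lemma \ref{lem:3-edges} gives two triangles $\triangle pyq,\ \triangle pzq\subset\partial K^*$; because $y$ and $z$ lie on opposite sides along the polygon's boundary, these triangles automatically straddle $[pq]$, which is exactly the straddling you could not extract from your one-edge-per-plane family. Now choose $s$ in the interior of $K^*$ and set $C=\text{conv}(\triangle pyq\cup\triangle pzq\cup\{s\})$, a polytope whose vertices are among $p,q,y,z,s$. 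The planes of the two triangles support $K^*$ along the line $pq$, so every direction in which $K^*$ extends from $p$ within a small angle of $\overrightarrow{pq}$ lies in the cone generated by $q-p$, $y-p$, $z-p$, $s-p$; hence for small $\eps$ every admissible $r$ lies in $C\subset K^*$. Such an $r$ is none of the five vertices of $C$, so it is a convex combination of other points of $K^*$ and cannot be extreme. This one polytope disposes simultaneously of interior points, points on the flat faces, and everything in between --- precisely the cases your triangle sweep cannot separate.
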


\begin{proof}
The statement is evident if the line segment $[pq]$ passes through the interior of $K^*$,
so we can assume that $[pq]\subset\partial K^*$.

Let $x$ denote the midpoint of $[pq]$.
Choose a plane $P$ through the origin which intersects $[pq]$ transversely at $x$.  The intersection $K^*\cap P$ is a polygon, where the sides extending from $x$ are denoted by the line segments $[xy]$ and $[xz]$.  We refer to Figure \ref{f:intersect} for clarity.

\begin{figure}
\begin{tikzpicture}[line cap=round,line join=round,>=triangle 45,x=1.0cm,y=1.0cm]
\clip(-3.7,1.94) rectangle (1.84,5.86);
\fill[color=zzttqq,fill=zzttqq,fill opacity=0.1] (-3.22,5.08) -- (-2.22,2.6) -- (1.18,3.66) -- cycle;
\fill[color=zzttqq,fill=zzttqq,fill opacity=0.1] (-0.28,5.14) -- (-3.22,5.08) -- (1.18,3.66) -- cycle;
\draw [line width=2.8pt] (-3.22,5.08)-- (1.18,3.66);
\draw [line width=2pt] (-1.27,4.45)-- (-2.22,2.6);
\draw [line width=2pt] (-1.27,4.45)-- (-0.28,5.14);
\draw [color=zzttqq] (-3.22,5.08)-- (-2.22,2.6);
\draw [color=zzttqq] (-2.22,2.6)-- (1.18,3.66);
\draw [color=zzttqq] (1.18,3.66)-- (-3.22,5.08);
\draw [color=zzttqq] (-0.28,5.14)-- (-3.22,5.08);
\draw [color=zzttqq] (-3.22,5.08)-- (1.18,3.66);
\draw [color=zzttqq] (1.18,3.66)-- (-0.28,5.14);
\begin{scriptsize}
\fill [color=black] (-3.22,5.08) circle (1.5pt);
\draw[color=black] (-3.44,5.38) node {$p$};
\fill [color=black] (1.18,3.66) circle (1.5pt);
\draw[color=black] (1.52,3.72) node {$q$};
\fill [color=black] (-1.27,4.45) circle (1.5pt);
\draw[color=black] (-1.68,4.32) node {$x$};
\fill [color=black] (-2.22,2.6) circle (1.5pt);
\draw[color=black] (-2.4,2.38) node {$y$};
\fill [color=black] (-0.28,5.14) circle (1.5pt);
\draw[color=black] (-0.04,5.42) node {$z$};
\end{scriptsize}
\end{tikzpicture}
\caption{}
\label{f:intersect}
\end{figure}

By Lemma~\ref{lem:3-edges}, the triangles $\triangle pyq$ and $\triangle pzq$ lie completely in $\partial K^*$.  Choose a point $s$ in the interior of $K^*$.  Clearly there exists an $\epsilon >0$ such that for any point $r\in K^*$, if $|p-r|<\epsilon$ and $\angle rpq<\epsilon$, then $r$ lies in the convex hull $\text{conv}(\triangle pyq,\triangle pzq,s)$.  Hence the result follows.
\end{proof}

\begin{proof}[Proof of Theorem \ref{thm:reformulation}]

Suppose to the contrary that $\{q_n\}$ is an infinite set of distinct extreme points contained within $K^*$.  Pass $\{q_n\}$ to a convergent subsequence $\{q_{n_k}\}$, and let $p \in \partial K^*$ be the point such that $q_{n_k} \to p$ as $n_k \to \infty$.  Choose the convergent subsequence $\{q_{n_k}\}$ so that the unit vectors $v_{n_k}=\tfrac{q_{n_k}-p}{|q_{n_k}-p|}$
also converge,
say $v_{n_k} \to u$. 

Consider the plane $P$ which passes through  $p$, $u$ and the origin.
Since the intersection $P\cap K^*$ is a polygon,
there is a line segment $[pq]\subset\partial K^*$
pointing from $p$ in the direction of $u$.

Applying Proposition~\ref{prop:main}, 
we arrive at a contradiction.
\end{proof}

\section{Acknowledgments}

I am greatly indebted to Anton Petrunin, Greg Kuperberg, and Sergei Tabachnikov for suggestions and editing drafts.  This paper would not have been possible without them.  I would like to thank Branko Grunbaum for suggestions, as well as Pennsylvania State University and the National Science Foundation for funding this research through a Mathematics Advanced Study Semesters (MASS) Fellowship.

\end{document}